\documentclass[12pt]{article}
\usepackage{amsmath, amsthm}\usepackage{enumerate}\usepackage{amssymb}\usepackage{bbold}\usepackage{color}
\newtheorem{theorem}{Theorem}[section]
\newtheorem{proposition}[theorem]{Proposition}

\newtheorem{example}[theorem]{Example}
\newtheorem{corollary}[theorem]{Corollary}
\newtheorem{remark}[theorem]{Remark}
\newtheorem{observation}[theorem]{Observation}
\newtheorem{definition}[theorem]{Definition}

\DeclareMathOperator{\rc}{\xrightarrow[]{\mathbb{r}}}
\DeclareMathOperator{\oc}{\xrightarrow[]{\mathbb{o}}}
\DeclareMathOperator{\cc}{\xrightarrow[]{\mathbb{c}}}
\DeclareMathOperator{\mrc}{\xrightarrow[]{\mathbb{mr}}}
\DeclareMathOperator{\moc}{\xrightarrow[]{\mathbb{mo}}}

\DeclareMathOperator{\lmoc}{\xrightarrow[]{\mathbb{m}_l\mathbb{o}}}
\DeclareMathOperator{\rmoc}{\xrightarrow[]{\mathbb{m}_r\mathbb{o}}}
\DeclareMathOperator{\lmcc}{\xrightarrow[]{\mathbb{m}_l\mathbb{c}}}
\DeclareMathOperator{\rmccA}{\xrightarrow[]{\mathbb{m}_r\mathbb{c}(A)}}
\DeclareMathOperator{\rmcc}{\xrightarrow[]{\mathbb{m}_r\mathbb{c}}}
\DeclareMathOperator{\mcc}{\xrightarrow[]{\mathbb{m}\mathbb{c}}}

\DeclareMathOperator{\mrrc}{\xrightarrow[]{\mathbb{m}_r\mathbb{r}}}
\DeclareMathOperator{\convcone}{\xrightarrow[]{\mathbb{c}_1}}
\DeclareMathOperator{\convctwo}{\xrightarrow[]{\mathbb{c}_2}}

\begin{document}

\title{Multiplicative order continuous operators on Riesz algebras}
\maketitle
\author{\centering {Abdullah Ayd{\i}n$^{1}$, Eduard Emelyanov$^{2}$, Svetlana Gorokhova$^{3}$}\\ 
\small $^1$ Mu\c{s} Alparslan University,  Turkey, a.aydin@alparslan.edu.tr\\
\small $^2$ Sobolev Institute of Mathematics, Russia, emelanov@math.nsc.ru\\ 
\small $^3$ Southern Mathematical Institute, Russia, lanagor71@gmail.com\\ 
}
\begin{abstract}
In this paper, we investigate operators on Riesz algebras, which are continuous with respect to multiplicative modifications of  order convergence and relatively uniform convergence. We also introduce and study mo-Lebesgue, mo-$KB$, and mo-Levi operators.
\end{abstract} 

{\bf{Keywords:} \rm Riesz space, relatively uniform convergence, multiplicative order convergence, $\mathbb{mo}$-continuous operator, $\mathbb{omo}$-continuous operator, Riesz algebra, $d$-algebra, $\mathbb{mo}$-bounded operator, $\mathbb{mo}$-$KB$ operator.}

{\bf MSC2020:} {\normalsize 46A40, 46B42, 46J40,  47B65}
\maketitle

\section{Introduction}

It is well known that the order, relatively uniform, unbounded order, and various multiplicative 
order convergences in Riesz algebras are not topological in general (see for example \cite[Thm.5]{DEM1}, \cite[Thm.2.2]{DEM2}, \cite[Thm.2]{Gor}). 
However, even without topological structure, several kinds of continuous operators can be defined (see, e.g. \cite{AEEM1}). 
As far as we know, there is no sufficiently  comprehensive study of operator theory on Riesz algebras. 
The aim of this paper is to put forth and study operators on Riesz algebras which are continuous e.g. with respect to 
$\mathbb{mo}$- or $\mathbb{mr}$- convergences. The results obtained in the settings of Riesz algebras may also shed light 
on operators on the lattice-normed algebras (cf. \cite{Ay1,Ay2,AEEM1,AEEM2,Pliev}). 
Throughout the paper we assume that all vector spaces are real and all operators are linear. In what follows, we denote by letters $X$ and $Y$ Riesz spaces and  by $E$ and $F$ Riesz algebras.

A net $(x_\alpha)_{\alpha\in A}$ in a Riesz space $X$ is {\em order convergent} 
(or {\em $\mathbb{o}$-convergent}, for short) to $x\in X$, if there exists a net $(y_\beta)_{\beta\in B}$ satisfying 
$y_\beta \downarrow 0$, 
and for any $\beta\in B$, there exists $\alpha_\beta\in A$, such that $|x_\alpha-x|\leq y_\beta$ for all $\alpha\geq\alpha_\beta$. 
In this case, we write $x_\alpha\oc x$. 
A net $(x_\alpha)_{\alpha\in A}$ in $X$ {\em relatively uniform converges} to $x\in X$
($x_{\alpha}\rc x$, for short) if there exists $u\in X_+$, such that, for any $n\in\mathbb{N}$, 
there exists $\alpha_n$ such that $|x_\alpha-x|\le\frac{1}{n} u$ for all $\alpha\ge\alpha_n$ 
(see, for example, \cite[1.3.4, p.20]{Ku}). An operator $T$ between Riesz spaces is called:
\begin{enumerate}
\item[-] {\em order bounded}, if $T$ takes order bounded sets to order bounded sets;  
\item[-] {\em regular}, if $T=T_1-T_2$ with $T_1,T_2\ge 0$;
\item[-] {\em order continuous}, if $Tx_\alpha \oc Tx$ whenever $x_\alpha\oc x$;
\item[-] {\em relatively uniform continuous}, if $Tx_\alpha \rc Tx$ whenever $x_\alpha\rc x$.
\end{enumerate}
It is well known that order continuous and relatively order continuous operators are order bounded.
The collection ${\cal L}_r(X,Y)$ of all regular operators between Riesz spaces $X$ and $Y$
is a subspace of the vector space ${\cal L}_b(X,Y)$ of all bounded operators from $X$ to $Y$.
Let $Y$ be Dedekind complete, then ${\cal L}_r(X,Y)$ is a Dedekind complete Riesz space (cf. \cite[Thm.1.67]{AB1})  containing 
the collection ${\cal L}_n(X,Y)$ of all order continuous operators from $X$ to $Y$ as a band (cf. \cite[Thm.1.73]{AB1}).
We write ${\cal L}_r(X)$ for ${\cal L}_r(X,X)$; ${\cal L}_n(X)$ for ${\cal L}_n(X,X)$; etc.

Recall that a Riesz space $E$ is called {\em Riesz algebra} 
if $E$ is an associative algebra whose positive cone $E_+$ is closed under the algebra multiplication, i.e., $x\cdot y\in E_+$ 
whenever $x,y\in E_+$. A Riesz algebra $E$ is called (cf. \cite{AB2,AEG,BR,Hu,Ku,Pag}): 
\begin{enumerate}
\item[-] {\em left $($right$)$ $d$-algebra}, whenever $u\cdot(x\wedge y)=(u\cdot x)\wedge(u\cdot y)$ 
(resp., $(x\wedge y)\cdot u=(x\cdot u)\wedge(y\cdot u)$) for all  $x,y\in E$ and $u\in E_+$;
\item[-] {\em $d$-algebra} if $E$ is both left and right $d$-algebra;
\item[-] {\em left $($right$)$ $f$-algebra} if $x\wedge y=0$ implies $(u\cdot x)\wedge y=0$ 
(resp., $(x\cdot u)\wedge y=0$)  for all $u\in E_+$;
\item[-] {\em $f$-algebra} if $E$ is both left and right $f$-algebra;
\item[-] {\em semiprime} whenever the only nilpotent element in $E$ is $0$; 
\item[-] {\em unital} if $E$ has a positive multiplicative unit.
\end{enumerate}
Any Riesz space $E$ becomes a commutative $f$-algebra with respect to 
the {\em trivial algebraic multiplication} $x\ast y = 0$ for all $x,y \in E$,  
and $(E,\ast)$ is neither unital nor semiprime unless $\dim(E)=0$.

\begin{example}\label{l-algebra $R^D$}{\em
\cite[Ex.5]{AEG}. In the Riesz space $\mathbb{R}^D$ of all $\mathbb{R}$-valued functions on a set $D$, 
any $f$-algebra multiplication $\ast$ is uniquely determined by the function $\zeta(d):=[\mathbb{I}_{\{d\}}\ast\mathbb{I}_{\{d\}}](d)$, 
where $\mathbb{I}_{A} \in\mathbb{R}^D$ is the characteristic function of $A\subseteq D$. 
Furthermore, $(\mathbb{R}^D,\ast)$ is unital iff it is semiprime iff $\zeta$ is a weak order unit in $\mathbb{R}^D$.}
\end{example}

Let $\mathbb{c}$ be a linear convergence on a Riesz algebra $E$ 
(see, e.g., \cite[Def.1.6]{AEG}).

\begin{definition}%%%OK
{\em [cf. \cite[Def.5.3]{AEG}] The algebra multiplication in $E$ is called {\em left $\mathbb{c}$-continuous} 
{\em  $($right $\mathbb{c}$-continuous}$)$ whenever $x_\alpha\cc x$ implies 
$y\cdot x_\alpha\cc y\cdot x$ (resp., $x_\alpha\cdot y\cc x\cdot y$) for every $y\in E$.
The algebra multiplication is called {\em $\mathbb{c}$-continuous} if it is both  left and right $\mathbb{c}$-continuous.} 
\end{definition}

\begin{definition}%%%OK
{\em Let $X$ and $Y$ be Riesz spaces equipped with linear convergences $\mathbb{c}_1$ and $\mathbb{c}_2$ respectively.
An operator $T:X\to Y$ is called {\em $\mathbb{c}_1\mathbb{c}_2$-continuous}, whenever $x_\alpha\convcone x$ in $X$ implies $Tx_\alpha\convctwo Tx$ in $Y$. 
In the case when $\mathbb{c}_1 = \mathbb{c}_2$, we say that $T$ is {\em $\mathbb{c}_1$-continuous}. 
The collection of all $\mathbb{c}_1\mathbb{c}_2$-continuous operators from $X$ to $Y$ is denoted by 
${\cal L}_{\mathbb{c}_1\mathbb{c}_2}(X,Y)$, and if $\mathbb{c}_1 = \mathbb{c}_2$, we denote 
${\cal L}_{\mathbb{c}_1\mathbb{c}_2}(X,Y)$ by ${\cal L}_{\mathbb{c}_1}(X,Y)$, and 
${\cal L}_{\mathbb{c}_1}(X,X)$ by ${\cal L}_{\mathbb{c}_1}(X)$.}
\end{definition}

A net $x_\alpha$ in a Riesz algebra $E$ is said to be {\em  left $($right$)$ multiplicative $\mathbb{c}$-convergent} to $x$ whenever
$$
   u\cdot |x_\alpha-x|\cc 0 \ \ \ \ (\text{respectively} 
   \ \ \  |x_\alpha-x|\cdot u\cc 0) \ \ \ (\forall u\in E_+),
$$	
briefly $x_\alpha\lmcc x$ (resp. $x_\alpha\rmcc x$).
If  $x_\alpha\lmcc x$ and $x_\alpha\rmcc x$ simultaneously, we write $x_\alpha\mcc x$ (cf. \cite[Def.5.4]{AEG}, \cite{Ay1,Ay2,AE}).
It is worth to notice that many multiplicative $\mathbb{c}$-convergences could be defined by specifying the collections of admissible
factors, e.g. the right multiplicative $\mathbb{c}$-convergence of a net $x_\alpha$ in $E$ to $x\in E$ w.r. to  $A\subseteq E$:
$$
    x_\alpha\rmccA x \ \ \ \text{whenever} \ \ \  |x_\alpha-x|\cdot u\cc 0 \ \ \ (\forall u\in A)
$$
We postpone the study of such specified convergences to further papers.  

\begin{remark}\label{the same}%%%OK
{\em Clearly,  $\mathbb{m}_l\mathbb{c} \equiv \mathbb{m}_r\mathbb{c}$ in commutative algebras. Moreover, 
$\mathbb{m}_l\mathbb{c}$-convergence turns to $\mathbb{m}_r\mathbb{c}$-convergence and vice versus, 
if we replace the algebra multiplication ``~$\cdot$~" in $E$ by ``~$\hat{\cdot}$~" such that $x\ \hat{\cdot}\ y := y\cdot x$.}
\end{remark}

We refer for more information on Riesz spaces and Riesz algebras to \cite{AB1,AB2, AGG, AEEM2, AEG,  Ku,LZ,Pag,Vu,Za}. 
The structure of the present paper is as follows. In Section 2, we investigate $\mathbb{mo}$-, $\mathbb{rmo}$, and 
$\mathbb{omo}$-continuous operators. Section 3 is devoted to $\mathbb{mo}$-Lebesgue, $\mathbb{mo}$-$KB$, 
and $\mathbb{mo}$-Levi operators, and to the domination problem for such operators. 

\section{Basic properties of $\mathbb{mo}$-continuous operators}

In this section, we investigate basic properties of $\mathbb{mo}$-, $\mathbb{omo}$-, and $\mathbb{rmo}$-continuous operators 
in  Riesz algebras.

Let $X$ be a Dedekind complete Riesz space. The space ${\cal L}_r(X)$ is a unital Dedekind complete 
Riesz algebra under the composition operation, and the space ${\cal L}_n(X)$ is a Riesz subalgebra of ${\cal L}_r(X)$.
It is well known that the algebra multiplication in ${\cal L}_r(X)$ is right $\mathbb{o}$-continuous, whereas in 
${\cal L}_n(X)$ it is $\mathbb{o}$-continuous (see, e.g. \cite[Thm.1.56]{AB2}).
The following result extends this fact to $\mathbb{m}\mathbb{o}$-convergence.

\begin{theorem}\label{main example}%%%OK
Let $X$ be a Dedekind complete Riesz space. Then the algebra multiplication is$:$
\begin{enumerate}
\item[$(i)$] right $\mathbb{m}_r\mathbb{o}$-continuous in $E={\cal L}_r(X)$$;$
\item[$(ii)$] left and right  $\mathbb{m}\mathbb{o}$-continuous in $F={\cal L}_n(X)$.
\end{enumerate}
\end{theorem}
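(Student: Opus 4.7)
The strategy for both parts rests on three standard facts: the operator inequality $|AB|\leq|A|\cdot|B|$ for regular operators on a Dedekind complete Riesz space; the already recorded order continuity of composition (right in $\mathcal{L}_r(X)$, both sides in $\mathcal{L}_n(X)$, cf.\ \cite[Thm.1.56]{AB2}); and the squeeze principle for order convergence, namely $0\le a_\alpha\le b_\alpha$ with $b_\alpha\oc 0$ implies $a_\alpha\oc 0$.

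For $(i)$, I will fix $T\in E$ and an arbitrary admissible factor $U\in E_+$, assuming $S_\alpha\rmoc S$. Using $|S_\alpha T-ST|=|(S_\alpha-S)T|\le|S_\alpha-S|\cdot|T|$ together with associativity of composition, I obtain
\[
 |S_\alpha T-ST|\cdot U \;\le\; |S_\alpha-S|\cdot(|T|\cdot U).
\]
Since $|T|\cdot U\in E_+$, the hypothesis gives that the right-hand side is $\mathbb{o}$-null, and the squeeze principle then yields $S_\alpha T\rmoc ST$, proving right $\mathbb{m}_r\mathbb{o}$-continuity.

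For $(ii)$, I will exploit that $\mathcal{L}_n(X)$ is a Riesz subalgebra of $\mathcal{L}_r(X)$, so $|T|\in F_+$ whenever $T\in F$. Assuming $S_\alpha\mcc S$ and fixing $T\in F$ and $U\in F_+$, I split the target convergence $T\cdot S_\alpha\mcc T\cdot S$ into the two estimates
\[
 U\cdot|TS_\alpha-TS|\le (U\cdot|T|)\cdot|S_\alpha-S|,\qquad |TS_\alpha-TS|\cdot U\le |T|\cdot(|S_\alpha-S|\cdot U).
\]
The first inequality is squeezed by the left-multiplicative part of the hypothesis applied to the admissible factor $U\cdot|T|\in F_+$. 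The second combines the right-multiplicative part $|S_\alpha-S|\cdot U\oc 0$ with the left order continuity of composition by $|T|\in F_+$ in $\mathcal{L}_n(X)$. The right $\mathbb{mo}$-continuity $S_\alpha\cdot T\mcc S\cdot T$ is entirely symmetric, invoking instead the right order continuity of composition, which is already available in the larger algebra $\mathcal{L}_r(X)$.

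The main (mild) obstacle is just bookkeeping: each step must track which half of the multiplicative hypothesis is being invoked (left or right) and which one-sided order continuity of composition is needed. Once the inequality $|AB|\leq|A|\cdot|B|$ removes the cross term $T$ and associativity regroups the factors, the remaining reductions are direct applications of the known facts, and the Riesz-subalgebra property of $\mathcal{L}_n(X)$ ensures that all factors we feed back into the hypothesis are legitimate admissible elements of $F_+$.
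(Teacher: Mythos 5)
Your proof is correct and follows essentially the same route as the paper's: the inequality $|AB|\le|A|\cdot|B|$ plus associativity reduces everything to the multiplicative hypothesis on $|S_\alpha-S|$ with the regrouped factor ($|T|\cdot U$ or $U\cdot|T|$), and the remaining case is handled by the one-sided order continuity of composition from \cite[Thm.1.56]{AB2}. The only difference is organizational: you spell out the left $\mathbb{mo}$-continuity in $(ii)$, which the paper dismisses as ``similar,'' and you note explicitly that the Riesz-subalgebra property keeps $|T|$ and the regrouped factors inside $F_+$.
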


\begin{proof}
$(i)$ Let $T_\alpha\rmoc T$ in $E$ and $R\in E$, Then, for every $S\in E_+$, 
$$
   |T_\alpha\circ R-T\circ R|\circ S\le |T_\alpha-T|\circ |R|\circ S=|T_\alpha-T|\circ(|R|\circ S)\oc 0.
   \eqno(1)
$$
By $($1$)$,  $|T_\alpha\circ R-T\circ R|\circ S\oc 0$ for every $S\in E_+$ and 
hence $T_\alpha\circ R\rmoc T\circ R$. Since $R\in E$ is arbitrary, the algebra multiplication  in $E$
is right $\mathbb{m}_r\mathbb{o}$-continuous.

$(ii)$
Let $T_\alpha\moc T$ in $F$ and $R\in F$. Since $S \circ|T_\alpha-T|\oc 0$ for each $S\in F_+$, it follows from 
$$
   S \circ|T_\alpha\circ R-T\circ R|\le S \circ|T_\alpha-T|\circ |R|=(S \circ|T_\alpha-T|)\circ|R| \ \ \ (\forall S\in F_+)
$$
that $S\circ|T_\alpha\circ R-T\circ R|\oc 0$, because the multiplication in $E$ is right $\mathbb{o}$-continuous 
by \cite[Thm.1.56]{AB2}. Since $S\in F$  is arbitrary, we conclude that $T_\alpha\circ R\lmoc T\circ R$ and hence
the multiplication in $F$ is right $\mathbb{m}_l\mathbb{o}$-continuous. 
It follows from $(i)$ that $T_\alpha\circ R\rmoc T\circ R$, and hence $T_\alpha\circ R\moc T\circ R$.
Since $R\in F$ is arbitrary, the algebra multiplication in $F$ is right $\mathbb{mo}$-continuous.

We skip similar elementary arguments which shows that the algebra multiplication in $F$ is also left $\mathbb{mo}$-continuous.
\end{proof}

The following example shows that, in general, the algebra multiplication is not left $\mathbb{m}_r\mathbb{o}$-continuous in 
${\cal L}_r(X)$.

\begin{example}\label{not right mo-cont}%%%OK
{\em Take a free ultrafilter $\cal{U}$ on $\mathbb{N}$. Recall that any bounded real sequence $x_k$ {\em converges along} 
$\cal{U}$ to some $x_{\cal{U}}=\lim_{\cal{U}}x_k \in\mathbb{R}$ in the sense that $\{k\in\mathbb{N}:|x_k-x_{\cal{U}}|\le\varepsilon\}\in\cal{U}$ 
for every $\varepsilon>0$. Define operators 
$L,T_n\in {\cal L}_r(\ell^\infty)$ by $Lx:=x_{\cal{U}}\cdot \mathbb{1}_\mathbb{N}$
and $T_nx:=x_{\cal{U}}\cdot\mathbb{1}_{\{k\in\mathbb{N}:k\ge n\}}$.
It is easy to see that $T_n\downarrow 0$ and hence $T_n\rmoc 0$
in ${\cal L}_r(\ell^\infty)$.
However, 
$$
 L\circ T_n(x)= L(x_{\cal{U}}\cdot \mathbb{1}_{\{k\in\mathbb{N}:k\ge n\}})
                      = x_{\cal{U}}\cdot\mathbb{1}_\mathbb{N}
$$ 
implies that
$L\circ T_n\circ I=L\circ T_n \equiv L\ne 0$.
 Thus the sequence  $L\circ T_n$ does not $\mathbb{m}_r\mathbb{o}$-converge to $0$, and hence the algebra multiplication in 
the Riesz algebra ${\cal L}_r(\ell^\infty)$ is neither left $\mathbb{o}$-, nor left $\mathbb{m}_r\mathbb{o}$-continuous.}
\end{example}

In the following straightforward properties, we restrict ourselves to the "right" case only skipping the analogical "left" case.

\begin{observation}\label{remark}%%%OK
 {\em Let $E$ and $F$ be Riesz algebras. 
\begin{enumerate}[$a)$]
\item The collections ${\cal L}_{\mathbb{m}_r\mathbb{o}}(E,F)$, ${\cal L}_{\mathbb{o}\mathbb{m}_r\mathbb{o}}(E,F)$, and ${\cal L}_{\mathbb{r}\mathbb{m}_r\mathbb{o}}(E,F)$
of all $\mathbb{m}_r\mathbb{o}$-, $\mathbb{o}\mathbb{m}_r\mathbb{o}$-, and $\mathbb{r}\mathbb{m}_r\mathbb{o}$-continuous operators from $E$ to $F$ are vector spaces.

\item If $E$ is unital, then ${\cal L}_{\mathbb{o}\mathbb{m}_r\mathbb{o}}(E,F) \subseteq {\cal L}_{\mathbb{m}_r\mathbb{o}}(E,F)$.

\item If $F$ is unital, then ${\cal L}_{\mathbb{o}\mathbb{m}_r\mathbb{o}}(E,F) \subseteq {\cal L}_n(E,F)$.

\item If $E$ has right $\mathbb{o}$-continuous algebra multiplication then 
${\cal L}_{\mathbb{m}_r\mathbb{o}}(E,F)\subseteq{\cal L}_{\mathbb{o}\mathbb{m}_r\mathbb{o}}(E,F)$ (cf. \cite[Lem.5.5]{AEG}).

\item If $E$ is Archimedean then ${\cal L}_{\mathbb{o}\mathbb{m}_r\mathbb{o}}(E,F) \subseteq {\cal L}_{\mathbb{r}\mathbb{m}_r\mathbb{o}}(E,F)$.

\item If $E$ is Archimedean and has right $\mathbb{o}$-continuous algebra multiplication then
${\cal L}_{\mathbb{m}_r\mathbb{o}}(E,F) \subseteq{\cal L}_{\mathbb{r}\mathbb{m}_r\mathbb{o}}(E,F)$ by $d)$ and $e)$.

\item If $F$ is an Archimedean $f$-algebra then the algebra multiplication in $F$ is commutative and $\mathbb{o}$-continuous (cf. \cite{Pag,Hu}) and hence
${\cal L}_n(E,F) \subseteq {\cal L}_{\mathbb{o}\mathbb{m}\mathbb{o}}(E,F)$ (cf. \cite[Lem.5.1]{AEG}).
\end{enumerate}}
\end{observation}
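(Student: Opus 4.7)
The plan is to verify each of $a)$--$g)$ by directly unwinding the definitions of the convergences involved; once this is done, all seven statements reduce to short one-line observations, so the "proof" is really a coordinated enumeration rather than any genuine argument.

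For $a)$, I would note that each of $\mathbb{o}$-, $\mathbb{r}$-, and $\mathbb{m}_r\mathbb{o}$-convergence is linear, and that the triangle inequality $|(\lambda T + \mu S)x_\alpha - (\lambda T + \mu S)x| \le |\lambda|\cdot|Tx_\alpha - Tx| + |\mu|\cdot|Sx_\alpha - Sx|$ survives right-multiplication by any $u \in F_+$; linearity of ${\cal L}_{\mathbb{m}_r\mathbb{o}}(E,F)$, ${\cal L}_{\mathbb{o}\mathbb{m}_r\mathbb{o}}(E,F)$, and ${\cal L}_{\mathbb{r}\mathbb{m}_r\mathbb{o}}(E,F)$ follow at once. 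For $b)$, if $e$ is the unit of $E$ then specializing $u = e$ in the definition $|x_\alpha - x|\cdot u \oc 0$ gives $|x_\alpha - x| \oc 0$; hence $\mathbb{m}_r\mathbb{o}$-convergence in $E$ implies $\mathbb{o}$-convergence in $E$, and composing with an $\mathbb{o}\mathbb{m}_r\mathbb{o}$-continuous operator yields the inclusion. The argument for $c)$ is the same trick applied in $F$: a unit $e_F$ in $F$ turns $Tx_\alpha \rmoc Tx$ into $Tx_\alpha \oc Tx$ by taking $u = e_F$.

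For $d)$, the hypothesis that multiplication in $E$ is right $\mathbb{o}$-continuous gives directly that $x_\alpha \oc x$ in $E$ forces $|x_\alpha - x|\cdot u \oc 0$ for every $u \in E_+$, i.e. $x_\alpha \rmoc x$; composing with a $\mathbb{m}_r\mathbb{o}$-continuous operator yields the inclusion, as noted in \cite[Lem.5.5]{AEG}. For $e)$, I would invoke the standard fact that in an Archimedean Riesz space relatively uniform convergence implies order convergence; given this, any $\mathbb{o}\mathbb{m}_r\mathbb{o}$-continuous operator is automatically $\mathbb{r}\mathbb{m}_r\mathbb{o}$-continuous. Part $f)$ is then the literal composition of $d)$ and $e)$, as the author already indicates.

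For $g)$, I would use the Birkhoff–Pierce type result (cited here from \cite{Pag,Hu}) that multiplication in an Archimedean $f$-algebra $F$ is commutative and $\mathbb{o}$-continuous. Thus if $T : E \to F$ is order continuous and $x_\alpha \oc x$ in $E$, then $|Tx_\alpha - Tx| \oc 0$ in $F$, and $\mathbb{o}$-continuity of multiplication in $F$ gives $u\cdot|Tx_\alpha - Tx| \oc 0$ for every $u \in F_+$, whence $Tx_\alpha \mcc Tx$. There is no serious obstacle in any of these parts; the only thing to watch is the bookkeeping between the "left" and "right" versions, but these are exchanged by the opposite multiplication $x\,\hat{\cdot}\,y := y\cdot x$ of Remark~\ref{the same}, so treating the "right" case suffices throughout.
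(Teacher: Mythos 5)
Your proposal is correct and matches the intended argument: the paper states these as ``straightforward properties'' without proof, and your definitional unwinding (units to pass between $\mathbb{m}_r\mathbb{o}$- and $\mathbb{o}$-convergence, right $\mathbb{o}$-continuity of multiplication for $d)$, the Archimedean property for $e)$ and $g)$) is exactly what is needed. The only nitpick is the stray $\mathbb{mc}$ arrow in part $g)$ where $\mathbb{mo}$ is meant; otherwise all seven parts check out.
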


The next result generalizes \cite[Prop.5.1]{AEG} with essentially the same proof.

\begin{proposition}%%%OK
The algebra multiplication in any Riesz algebra $F$ is both left and right $\mathbb{r}$-continuous.
\end{proposition}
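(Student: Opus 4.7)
The plan is to prove right $\mathbb{r}$-continuity directly from the definition, using only the fact that the positive cone $F_+$ is closed under the algebra multiplication; the left case will then follow by symmetry.

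First, I would record the following basic inequality valid in any Riesz algebra $F$: for all $a,b\in F$,
$$
   |a\cdot b|\leq |a|\cdot|b|.
$$
To see this, write $a=a^+-a^-$ and $b=b^+-b^-$ and expand
$a\cdot b = a^+\!\cdot b^+-a^+\!\cdot b^- - a^-\!\cdot b^+ + a^-\!\cdot b^-$. Each of the four products lies in $F_+$ because $F_+$ is closed under multiplication, so the triangle inequality in $F$ gives $|a\cdot b|\leq (a^++a^-)\cdot(b^++b^-)=|a|\cdot|b|$. This is the only structural input beyond linearity I expect to need.

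Next, fix $y\in F$ and suppose $x_\alpha\rc x$ with regulator $u\in F_+$, so that for every $n\in\mathbb{N}$ there is $\alpha_n$ with $|x_\alpha-x|\leq \tfrac{1}{n}u$ for all $\alpha\geq\alpha_n$. Using linearity of right multiplication by $y$ and the inequality above,
$$
   |x_\alpha\cdot y - x\cdot y| = |(x_\alpha-x)\cdot y| \leq |x_\alpha-x|\cdot|y| \leq \tfrac{1}{n}\,(u\cdot|y|)
$$
for all $\alpha\geq\alpha_n$. Since $u\cdot|y|\in F_+$, this shows $x_\alpha\cdot y\rc x\cdot y$ with regulator $u\cdot|y|$, proving right $\mathbb{r}$-continuity. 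The left case is completely analogous, using $|y\cdot(x_\alpha-x)|\leq|y|\cdot|x_\alpha-x|$ and the regulator $|y|\cdot u$; alternatively, one may invoke Remark \ref{the same} to swap the sides.

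I do not anticipate any real obstacle here. The only subtlety is that the inequality $|a\cdot b|\leq|a|\cdot|b|$ has to be justified in the generality of arbitrary Riesz algebras (no $f$- or $d$-algebra assumption), but this is immediate from the closure of $F_+$ under multiplication, as sketched above. Everything else is a straightforward scaling argument inside the definition of $\mathbb{r}$-convergence.
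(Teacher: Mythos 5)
Your proof is correct and follows essentially the same route as the paper: both rest on the inequality $|a\cdot b|\le|a|\cdot|b|$ and the observation that $\frac{1}{n}u$ turns into the regulator $\frac{1}{n}(u\cdot|y|)$ (resp.\ $\frac{1}{n}(|y|\cdot u)$). The only difference is that you explicitly justify $|a\cdot b|\le|a|\cdot|b|$ from the closedness of $F_+$ under multiplication, a step the paper takes for granted.
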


\begin{proof}
Let $x_\alpha\rc x$ in $F$ and $y\in F$. Then there exists $v\in F_+$ such that, for any $k\in\mathbb{N}$, 
there is $\alpha_k$ with $|x_\alpha-x|\le\frac{1}{k} v$ for all $\alpha\ge\alpha_k$. Since, for all $\alpha\ge\alpha_k$ we have
$$
   |y\cdot x_\alpha - y\cdot x|\le|y|\cdot|x_\alpha-x|\le \frac{1}{k} |y|\cdot v \ \ \& \ \  |x_\alpha\cdot y - x\cdot y|\le|x_\alpha-x|\cdot|y|\le \frac{1}{k} v\cdot |y|
$$ 
for all $\alpha\ge\alpha_k$, it follows $y\cdot x_\alpha\rc y\cdot x$ and $x_\alpha\cdot y\rc x\cdot y$. 
The result follows because of $y\in F$ was taken arbitrary. 
\end{proof}

\begin{proposition}\label{rr subseteq rmo}%%%OK
Let $X$ be a Riesz space and $F$ an Archimedean Riesz algebra. 
Then ${\cal L}_{\mathbb{rr}}(X,F)\subseteq{\cal L}_{\mathbb{r}\mathbb{m}\mathbb{o}}(X,F)$.
\end{proposition}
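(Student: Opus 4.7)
The plan is to reduce the statement to the following simple fact: in any Archimedean Riesz algebra, relatively uniform convergence implies multiplicative order convergence. Once this is in hand, the conclusion is immediate from the hypothesis on $T$.

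First, I would take $T\in {\cal L}_{\mathbb{rr}}(X,F)$ and a net $x_\alpha$ in $X$ with $x_\alpha\rc x$. By assumption, $Tx_\alpha\rc Tx$ in $F$. Unpacking this, there exists $v\in F_+$ and, for each $n\in\mathbb{N}$, an index $\alpha_n$ such that
$$
|Tx_\alpha-Tx|\le\tfrac{1}{n}v\qquad (\forall\,\alpha\ge\alpha_n).
$$

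Next, I would fix an arbitrary $u\in F_+$ and multiply both sides by $u$, using that the positive cone is closed under multiplication. This yields
$$
u\cdot|Tx_\alpha-Tx|\le\tfrac{1}{n}(u\cdot v)\quad\text{and}\quad |Tx_\alpha-Tx|\cdot u\le\tfrac{1}{n}(v\cdot u)
$$
for all $\alpha\ge\alpha_n$. Since $F$ is Archimedean, $\tfrac{1}{n}(u\cdot v)\downarrow 0$ and $\tfrac{1}{n}(v\cdot u)\downarrow 0$, hence both nets $u\cdot|Tx_\alpha-Tx|$ and $|Tx_\alpha-Tx|\cdot u$ are dominated (eventually) by sequences decreasing to $0$, which gives $u\cdot|Tx_\alpha-Tx|\oc 0$ and $|Tx_\alpha-Tx|\cdot u\oc 0$. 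Because $u\in F_+$ was arbitrary, this means $Tx_\alpha\lmoc Tx$ and $Tx_\alpha\rmoc Tx$ simultaneously, i.e. $Tx_\alpha\moc Tx$. Thus $T\in{\cal L}_{\mathbb{r}\mathbb{m}\mathbb{o}}(X,F)$.

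There is no serious obstacle here; the only nontrivial ingredient is the Archimedean hypothesis on $F$, which is exactly what converts the domination $|Tx_\alpha-Tx|\le\tfrac{1}{n}v$ into order convergence after multiplication by an arbitrary positive factor. Without Archimedeanness, $\tfrac{1}{n}(u\cdot v)$ need not decrease to $0$, and the argument breaks down; this is presumably the reason the hypothesis appears in the statement.
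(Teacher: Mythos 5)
Your proof is correct and is essentially identical to the paper's argument: unpack $Tx_\alpha\rc Tx$ to get $|Tx_\alpha-Tx|\le\frac{1}{n}v$ eventually, multiply by an arbitrary $u\in F_+$, and use the Archimedean property to conclude $\frac{1}{n}(u\cdot v)\downarrow 0$ and $\frac{1}{n}(v\cdot u)\downarrow 0$, hence $\mathbb{mo}$-convergence. The only cosmetic difference is that you write out both the left and right multiplications, whereas the paper treats the right case and declares the left case analogous.
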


\begin{proof}
Let $T\in{\cal L}_{\mathbb{rr}}(X,F)$. Suppose $x_\alpha\rc x$ in $X$. 
Then $Tx_\alpha\rc Tx$ in $F$. Take $u\in F_+$, such that, for any $n\in\mathbb{N}$, there exists $\alpha_n$ 
with $|Tx_\alpha-Tx|\le\frac{1}{n} u$ for all $\alpha\ge\alpha_n$. Take any $w\in F_+$. 
Then $|Tx_\alpha-Tx|\cdot w\le\frac{1}{n} u\cdot w$ for all $\alpha\ge\alpha_n$.
Since $F$ is Archimedean, then $\frac{1}{n} u\cdot w\downarrow 0$ and hence $|Tx_\alpha-Tx|\cdot w\oc 0$. 
Since $w\in F_+$ is arbitrary, $Tx_\alpha\rmoc Tx$.
As the proof of $Tx_\alpha\lmoc Tx$ is analogous, we obtain the desired result.
\end{proof}

\begin{theorem}\label{inclusion}%%%OK
Let $X$ be a Riesz space and let $F$ be an Archimedean Riesz algebra, then
$$
  {\cal L}_r(X,F)\subseteq{\cal L}_{\mathbb{rr}}(X,F)\subseteq
  {\cal L}_{\mathbb{r}\mathbb{m}_l\mathbb{r}}(X,F)\cap {\cal L}_{\mathbb{r}\mathbb{m}_r\mathbb{r}}(X,F)\subseteq 
  {\cal L}_{\mathbb{r}\mathbb{m}\mathbb{o}}(X,F).
$$	
\end{theorem}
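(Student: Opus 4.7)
The plan is to split the chain into three inclusions and verify each by tracking regulators of relative uniform convergence. The first two require no Archimedean assumption; only the last one needs that $F$ is Archimedean.

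For the first inclusion ${\cal L}_r(X,F)\subseteq{\cal L}_{\mathbb{rr}}(X,F)$, I would take $T=T_1-T_2$ with $T_1,T_2\ge 0$, and a net $x_\alpha\rc x$ with regulator $u\in X_+$ satisfying $|x_\alpha-x|\le\tfrac{1}{n}u$ for $\alpha\ge\alpha_n$. Positivity yields
$$
|Tx_\alpha-Tx|\le(T_1+T_2)|x_\alpha-x|\le\tfrac{1}{n}(T_1+T_2)u,
$$
so $(T_1+T_2)u\in F_+$ is a regulator witnessing $Tx_\alpha\rc Tx$.

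For the second inclusion, I fix $T\in{\cal L}_{\mathbb{rr}}(X,F)$ and suppose $x_\alpha\rc x$. By hypothesis $Tx_\alpha\rc Tx$, so there is a regulator $u\in F_+$ with $|Tx_\alpha-Tx|\le\tfrac{1}{n}u$ eventually. For an arbitrary $v\in F_+$, multiplying on the right by $v$ preserves the inequality (multiplication by positive elements is order preserving in any Riesz algebra), giving $|Tx_\alpha-Tx|\cdot v\le\tfrac{1}{n}(u\cdot v)$. Since $F_+$ is closed under multiplication, $u\cdot v\in F_+$ is a valid regulator, so $|Tx_\alpha-Tx|\cdot v\rc 0$. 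As $v$ was arbitrary, $Tx_\alpha\mrrc Tx$, and the symmetric left-multiplication argument gives the $\mathbb{m}_l\mathbb{r}$-convergence, placing $T$ in the claimed intersection.

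For the third inclusion, I would invoke the Archimedean hypothesis exactly as in the proof of Proposition~\ref{rr subseteq rmo}. Namely, if $z_\alpha\rc 0$ in the Archimedean Riesz space $F$ with regulator $w\in F_+$, then $\tfrac{1}{n}w\downarrow 0$ forces $z_\alpha\oc 0$. Applying this with $z_\alpha=|Tx_\alpha-Tx|\cdot v$ and, separately, $z_\alpha=v\cdot|Tx_\alpha-Tx|$ for each $v\in F_+$, the $\mathbb{m}_r\mathbb{r}$- and $\mathbb{m}_l\mathbb{r}$-convergences of $Tx_\alpha$ to $Tx$ upgrade to $\rmoc$- and $\lmoc$-convergences, whence $Tx_\alpha\moc Tx$.

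I do not anticipate any genuine obstacle; the whole argument is bookkeeping with regulators and the Archimedean property. The one thing to keep straight is that the regulator of the first step lives in $X_+$ and must be transported to $F_+$ by $T_1+T_2$, whereas in the second and third steps all regulators already live in $F_+$. The final step is essentially a repackaging of Proposition~\ref{rr subseteq rmo}, so this result could alternatively be stated as a corollary threaded through that proposition.
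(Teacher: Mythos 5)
Your proposal is correct and follows essentially the same route as the paper: decompose through $T_1+T_2$ for the first inclusion, multiply the regulator inequality by positive factors for the second, and use the Archimedean property of $F$ (r-convergence implies o-convergence) for the third. Your added observations — that only the last step needs $F$ Archimedean, and that it is essentially Proposition~\ref{rr subseteq rmo} — are accurate.
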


\begin{proof}
Despite the first incision is well known,  we include its proof for the convenience.
Let $T\in{\cal L}_r(X,F)$ and $x_\alpha\rc x$ in $X$. Take $T_1,T_2\ge 0$ with $T=T_1-T_2$ and $u\in X_+$, 
so that, for each $n\in\mathbb{N}$, there exists $\alpha_n$ such that $|x_\alpha-x|\le\frac{1}{n} u$ for all $\alpha\ge\alpha_n$. Then 
$$
    |Tx_\alpha-Tx|=|(T_1-T_2)x_\alpha-(T_1-T_2)x|=|T_1(x_\alpha-x)-T_2(x_\alpha-x)|\le
$$
$$
    |T_1(x_\alpha-x)|+|T_2(x_\alpha-x)|\le T_1|x_\alpha-x|+T_2|x_\alpha-x|\le \frac{1}{n}(T_1u+T_2u),
$$
and thus $Tx_\alpha\rc Tx$ in $E$. Hence ${\cal L}_r(X,F)\subseteq {\cal L}_{\mathbb{rr}}(X,F)$.

In the rest of the proof, it suffices to restrict ourselves to the "right" case only.
Let $T\in {\cal L}_{\mathbb{rr}}(X,F)$ and $x_\alpha\rc x$ in $X$. It follows from $Tx_\alpha\rc Tx$
that there exist $w\in F_+$ and a sequence of indexes $\alpha_n$ satisfying $|Tx_\alpha-Tx|\le\frac{1}{n}w$ 
for all $\alpha\ge\alpha_n$. Then, for every $f\in F_+$,
$$
   |Tx_\alpha-Tx|\cdot f\le\frac{1}{n}w\cdot f \ \ \ (\forall \alpha\ge\alpha_n).
\eqno(2)
$$ 
By $($2$)$, $Tx_\alpha\mrrc Tx$ and hence ${\cal L}_{\mathbb{rr}}(X,F)\subseteq {\cal L}_{\mathbb{r}\mathbb{m}_r\mathbb{r}}(X,F)$.

The inclusions ${\cal L}_{\mathbb{r}\mathbb{m}_l\mathbb{r}}(X,F)\subseteq {\cal L}_{\mathbb{r}\mathbb{m}_l\mathbb{o}}(X,F)$ and 
${\cal L}_{\mathbb{r}\mathbb{m}_r\mathbb{r}}(X,F)\subseteq {\cal L}_{\mathbb{r}\mathbb{m}_r\mathbb{o}}(X,F)$ hold true because
the $\mathbb{r}$-convergence implies $\mathbb{o}$-convergence in any Archi\-medean Riesz space, so in $F$.
\end{proof}

\begin{example}%%%OK
{\em The set $Orth(X)$ of all orthomorphisms on an Archimedean Riesz space $X$ is an Archimedean commutative unital algebra 
with $\mathbb{o}$-continu\-ous algebra multiplication \cite[Thm.8.6]{Pag}. 
Since $\mathbb{mo}$-convergence coincides with $\mathbb{o}$-convergence in $Orth(X)$,
any operator from $Orth(X)$ to an arbitrary Riesz algebra is $\mathbb{mo}$-continuous if and only if it is $\mathbb{omo}$-continuous.}
\end{example}

\begin{example}\label{order bounded not m cont}%%%OK
{\em \cite[Ex.6]{AEG} Let $\cal{U}$ be a free ultrafilter on $\mathbb{N}$. 
We define an operation $\ast$ in $\ell^\infty$ by 
$x\ast y:=\lim_{\cal{U}}(x_n\cdot y_n)\cdot\mathbb{1}_\mathbb{N}$. 
It is easy to see that $(\ell^\infty,\ast )$ is a commutative $d$-algebra.
The identity operator $I$ on $(\ell^\infty,\ast )$ is order bounded but
it is neither  $\mathbb{om}_l\mathbb{o}$- nor $\mathbb{om}_r\mathbb{o}$-co\-n\-ti\-nuous. 
Indeed, $f_n:=\mathbb{1}_{\{k\in\mathbb{N}:k\ge n\}}\oc 0$ in $\ell^\infty$, yet the sequence
$\mathbb{1} \ast I(f_n) = I(f_n)\ast \mathbb{1}=f_n\ast \mathbb{1}\equiv \mathbb{1}$ $\mathbb{o}$-converges to 
$\mathbb{1}\ne 0 =  \mathbb{1}\ast I(0) =I(0)\ast \mathbb{1}$.}
\end{example}

We continue with an extension of \cite[Thm.12]{AEG}.

\begin{proposition}\label{lr-d}%%%OK
Let $E$ be a Riesz algebra. The following conditions are equivalent$:$
\begin{enumerate}[$($i$)$]
\item $E$ is a left $($right$)$  $d$-algebra with left $($right$)$ $\mathbb{o}$-continuous multiplication$;$
\item $\inf_E (u\cdot A)=u\cdot\inf_E A$ $($resp., $\inf_E (A\cdot u)=(\inf_E A)\cdot u$$)$ 
for every $u\in E_+$ and $A\subseteq E$ such that $\inf_E A$ exists$;$
\item $\sup_E (u\cdot A)=u\cdot\sup_E A$ $($resp., $\sup_E (A\cdot u)=(\sup_E A)\cdot u$$)$ 
for every $u\in E_+$ and $A\subseteq E$ such that $\sup_E A$ exists.
\end{enumerate}
\end{proposition}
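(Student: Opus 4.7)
The plan is to prove only the left-algebra version of the equivalence; the right-algebra version then follows by applying the result to the reversed multiplication $\hat{\cdot}$ of Remark~\ref{the same}. Also, (ii)$\Leftrightarrow$(iii) comes for free from the substitution $A\mapsto -A$: linearity of multiplication gives $u\cdot(-A)=-u\cdot A$, and $\inf(-A)=-\sup A$ in any Riesz space, so each identity immediately converts to the other. This leaves me to prove (i)$\Leftrightarrow$(ii).

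For (i)$\Rightarrow$(ii), I would fix $a=\inf_E A$ and $u\in E_+$, and pass to the downward directed family $A^\downarrow:=\{a_1\wedge\cdots\wedge a_n:a_i\in A,\ n\in\mathbb{N}\}$, which, viewed as a net, satisfies $A^\downarrow\downarrow a$. An inductive use of the left $d$-algebra identity yields $u\cdot A^\downarrow=(u\cdot A)^\downarrow$, so the map $c\mapsto u\cdot c$ sends the decreasing net $A^\downarrow$ to a decreasing net. Left $\mathbb{o}$-continuity then gives $u\cdot A^\downarrow\oc u\cdot a$; a decreasing $\mathbb{o}$-convergent net must converge to its infimum, so $\inf_E(u\cdot A^\downarrow)=u\cdot a$. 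Since $u\cdot A$ and $(u\cdot A)^\downarrow$ share the same lower bounds in $E$, this rewrites as $\inf_E(u\cdot A)=u\cdot a$.

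For (ii)$\Rightarrow$(i), the left $d$-algebra identity is just the case $A=\{x,y\}$. For left $\mathbb{o}$-continuity, I would take $x_\alpha\oc x$ with regulator $y_\beta\downarrow 0$ satisfying $|x_\alpha-x|\le y_\beta$ eventually. Monotonicity of multiplication by $u\in E_+$ forces $u\cdot y_\beta$ to be decreasing, and (ii) applied to $\{y_\beta:\beta\in B\}$ gives $\inf_\beta u\cdot y_\beta=u\cdot 0=0$, hence $u\cdot y_\beta\downarrow 0$. Combining this with the elementary bound $|u\cdot z|\le u\cdot|z|$ for $u\in E_+$ (valid because $u\cdot z=u\cdot z^+-u\cdot z^-$ with $u\cdot z^\pm\ge 0$) yields $|u\cdot x_\alpha-u\cdot x|\le u\cdot y_\beta$ eventually, which is exactly $u\cdot x_\alpha\oc u\cdot x$.

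I expect the main delicacy to be in (i)$\Rightarrow$(ii): verifying that transporting the directed family $A^\downarrow$ through left multiplication by $u$ preserves both downward directedness and the infimum. Both pieces lean on the left $d$-algebra identity --- the first so that finite meets commute past $u\cdot(\cdot)$, and the second so that any lower bound of $u\cdot A$ automatically dominates each element of $(u\cdot A)^\downarrow$. Once this is in hand, left $\mathbb{o}$-continuity is used only to identify the $\mathbb{o}$-limit of the resulting decreasing net.
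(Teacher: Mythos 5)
Your proof is correct and follows the same route the paper outsources to \cite[Thm.12]{AEG}: direct $A$ downward by finite meets, push the resulting decreasing net through $c\mapsto u\cdot c$ using the $d$-algebra law, and identify the limit via left $\mathbb{o}$-continuity (and conversely recover the $d$-law from two-point sets and $\mathbb{o}$-continuity from $u\cdot y_\beta\downarrow 0$). The only cosmetic omission is in (ii)$\Rightarrow$(i): you verify $y\cdot x_\alpha\oc y\cdot x$ only for $y=u\in E_+$, but the general case follows at once from $y=y^+-y^-$ and linearity of $\mathbb{o}$-convergence.
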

\noindent
The proof of Proposition~\ref{lr-d} for the "right" case is identical with the proof of  Theorem~12 in \cite{AEG}, and the proof for the "left" case is similar.

\begin{theorem}\label{mo implies m}%%%OK
Each $\mathbb{m}_r\mathbb{o}$-continuous operator from a Riesz algebra $E$ satisfying
$\inf_E (A\cdot u)=(\inf_E A)\cdot u$ for every $u\in E_+$ and $A\subseteq E$, whenever $\inf_E A$ exists, to 
a Riesz algebra $F$ is $\mathbb{o}\mathbb{m}_r\mathbb{o}$-continuous. The same result is true with replacing "right" by "left".
\end{theorem}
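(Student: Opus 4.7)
The plan is to reduce $\mathbb{o}$-convergence in $E$ to $\mathbb{m}_r\mathbb{o}$-convergence, and then invoke the hypothesis that $T$ is $\mathbb{m}_r\mathbb{o}$-continuous. The bridge is Proposition~\ref{lr-d}: the condition $\inf_E(A\cdot u)=(\inf_E A)\cdot u$ stated in the theorem is precisely the characterization (ii) of a right $d$-algebra with right $\mathbb{o}$-continuous multiplication.

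First, I would fix a net $x_\alpha\oc x$ in $E$ and aim to verify $Tx_\alpha\rmoc Tx$. From $x_\alpha\oc x$ one obtains $|x_\alpha-x|\oc 0$ by the standard lattice properties of order convergence. By Proposition~\ref{lr-d}, the hypothesis on $E$ gives right $\mathbb{o}$-continuity of the multiplication, so for every $u\in E_+$ we have $|x_\alpha-x|\cdot u\oc 0\cdot u=0$. Since $u\in E_+$ was arbitrary, this is exactly the statement $x_\alpha\rmoc x$.

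Next, applying the assumed $\mathbb{m}_r\mathbb{o}$-continuity of $T$ to the convergence $x_\alpha\rmoc x$ yields $Tx_\alpha\rmoc Tx$ in $F$. Combining the two steps shows that $x_\alpha\oc x$ implies $Tx_\alpha\rmoc Tx$, which is exactly $\mathbb{o}\mathbb{m}_r\mathbb{o}$-continuity of $T$.

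Finally, the "left" version follows by the symmetry noted in Remark~\ref{the same}: replacing the multiplication $\cdot$ by $\hat{\cdot}$ with $x\,\hat{\cdot}\,y:=y\cdot x$ interchanges the roles of $\mathbb{m}_l\mathbb{o}$ and $\mathbb{m}_r\mathbb{o}$, and the hypothesis with "inf" on the left translates to the hypothesis with "inf" on the right for the opposite algebra. I do not anticipate a genuine obstacle here: the whole argument hinges on recognizing that the stated $\inf$-preservation condition is merely a reformulation of right $\mathbb{o}$-continuity of multiplication (via Proposition~\ref{lr-d}), after which the implication $\mathbb{o}\Rightarrow\mathbb{m}_r\mathbb{o}$ on $E$ becomes automatic and the conclusion follows by one application of the hypothesis on $T$.
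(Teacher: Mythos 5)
Your proposal is correct and follows essentially the same route as the paper: both reduce the problem to showing that $\mathbb{o}$-convergence implies $\mathbb{m}_r\mathbb{o}$-convergence in $E$, using the inf-preservation hypothesis (equivalently, via Proposition~\ref{lr-d}, the right $\mathbb{o}$-continuity of the multiplication) and then applying the $\mathbb{m}_r\mathbb{o}$-continuity of $T$. The paper merely unpacks the dominating net $y_\beta\downarrow 0$ and applies the hypothesis to $\{y_\beta\}$ directly, which is the same argument in expanded form.
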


\begin{proof}
We restrict ourselves to the "right" case only.
It is enough to show that $\mathbb{o}$-convergence implies $\mathbb{m}_r\mathbb{o}$-convergence in $E$.
Let $x_\alpha\oc x$ in $E$. Then there exists a net $y_\beta$ satisfying 
$y_\beta \downarrow 0$, and for any $\beta$, there is $\alpha_\beta$ such that $|x_\alpha-x|\leq y_\beta$ for all $\alpha\geq\alpha_\beta$. 
Take $u\in E_+$. Then $|x_\alpha-x|\cdot u\leq y_\beta\cdot u$  for all $\alpha\geq\alpha_\beta$. 
It follows from Proposition~\ref{lr-d} $\inf\limits_{\beta\in B}(y_\beta\cdot u)=(\inf\limits_{\beta\in B}y_\beta)\cdot u =0$.
Thus $y_\beta\cdot u\downarrow 0$, and hence $|x_\alpha-x| \cdot u \oc 0$. Since $u\in E_+$ is arbitrary,
 $x_\alpha\rmoc x$.
\end{proof}

Each Archimedean $f$-algebra $E$ is a commutative $d$-algebra (cf. \cite{Hu}), and hence satisfies the conditions of the above theorem due to Proposition~\ref{lr-d}. The following example shows that $\mathbb{o}$-continuity of the algebra multiplication in $E$ is essential in Theorem~\ref{mo implies m}.

\begin{example}\label{Ex A}%%%OK
{\em Consider the following $d$-algebra multiplication in the Riesz space $c$ of convergent real sequences:
$$
   x\ast y: =(\lim_{n\to\infty} x_n\cdot y_n)\cdot {\mathbb 1}_{\mathbb N}.
   \eqno(3)
$$
The algebra multiplication $\ast$ is not $\mathbb{o}$-continuous since 
$\mathbb{1}_{\{k\ge n\}}\downarrow 0$ but 
$\mathbb{1}_\mathbb{N}\ast\mathbb{1}_{\{k\ge n\}}\equiv\mathbb{1}_\mathbb{N}\ne 0$. 
The identity operator $I$ on $(c,\ast)$ is trivially $\mathbb{mo}$-continuous.
However, it is not $\mathbb{omo}$-continuous: $\mathbb{1}_{\{k\ge n\}}\oc 0$ yet 
$I(\mathbb{1}_{\{k\ge n\}})\ast \mathbb{1}_\mathbb{N}\equiv\mathbb{1}_\mathbb{N}$,
and hence $I(\mathbb{1}_{\{k\ge n\}})$ does not $\mathbb{mo}$-converge to $I(0)=0$.

Furthermore, $I$ is $\mathbb{momr}$-continuous on $(c,\ast)$.
Indeed, $g_\alpha \moc 0$ in $(c,\ast)$ implies $|g_\alpha|\ast\mathbb{1}_\mathbb{N}\oc 0$, and since 
by (3), the sequence  $|g_\alpha|\ast\mathbb{1}_\mathbb{N}$ lies in one dimentional sublattice of $c$, then 
 $|g_\alpha|\ast\mathbb{1}_\mathbb{N}\rc 0$, which implies that
$|g_\alpha|\ast u\rc 0$ for all $u\in c_+$, and hence $g_\alpha\mrc 0$ in $(c,\ast)$.}
\end{example}

\begin{definition}%%%OK
{\em A subset $A$ of a Riesz algebra $E$ is called:
\begin{enumerate}
\item[-]  {\em $\mathbb{m}_l\mathbb{o}$-bounded} if the set $u\cdot A$ is order bounded for each $u\in E_+$;
\item[-]  {\em $\mathbb{m}_r\mathbb{o}$-bounded} if $A \cdot u$ is order bounded for each $u\in E_+$;
\item[-]  {\em $\mathbb{m}\mathbb{o}$-bounded} if $A$ is both $\mathbb{m}_l\mathbb{o}$- and $\mathbb{m}_r\mathbb{o}$-bounded.
\end{enumerate}}
\end{definition}

Every $\mathbb{o}$-bounded subset of $E$ is $\mathbb{mo}$-bounded; and if the algebra multiplication in $E$ is trivial then every subset of $E$ is $\mathbb{mo}$-bounded. 

\begin{definition}%%%OK
{\em An operator $T$ from a Riesz space $X$ to a Riesz algebra $F$ is called $\mathbb{m}_l\mathbb{o}$-, 
$\mathbb{m}_r\mathbb{o}$-, or else {\em $\mathbb{m}\mathbb{o}$-bounded} if $T$ maps order bounded subsets of $X$ into 
$\mathbb{m}_l\mathbb{o}$-, $\mathbb{m}_r\mathbb{o}$-, or $\mathbb{m}\mathbb{o}$-bounded subsets of $F$ respectively.}
\end{definition}

If the algebra multiplication in $E$ is trivial, then every operator on $E$ is $\mathbb{mo}$-bounded. 

\begin{remark}%%%OK
{\em \ Let $X$ be a Riesz space, and let $F$ be a Riesz algebra. Then:
\begin{enumerate}[-]
\item every order bounded $($and hence every positive$)$ operator from $X$ to $F$ is $\mathbb{mo}$-bounded;
\item if $F$ is unital, then every ($\mathbb{m}_l\mathbb{o}$-) $\mathbb{m}_r\mathbb{o}$-bounded operator from $X$ to $F$ is order bounded. 
\end{enumerate}}
\end{remark}

The following theorem is an adopted version of \cite[Thm.2.1]{AS} 
for $\mathbb{mo}$-continuous operators. Its proof below is a modification 
of the proof of Theorem 2.1 from \cite{AS}.

\begin{theorem}\label{cont.isp-bdd}%%%OK 
{\em \ Let $T:E\to F$ be an operator between two Archimedean Riesz algebras.
\begin{enumerate}[(i)]
\item If $T$ is $\mathbb{mo}$-, $\mathbb{omo}$-, or $\mathbb{rmo}$-continuous then $T$ is $\mathbb{mo}$-bounded.
\item If $T$ is $\mathbb{m}_l\mathbb{o}$-, $\mathbb{om}_l\mathbb{o}$-, or $\mathbb{rm}_l\mathbb{o}$-continuous 
$($resp., $\mathbb{m}_r\mathbb{o}$-, $\mathbb{om}_r\mathbb{o}$-, or $\mathbb{rm}_r\mathbb{o}$--continuous$)$
then $T$ is $\mathbb{m}_l\mathbb{o}$-bounded $($resp., $\mathbb{m}_r\mathbb{o}$-bounded$)$.
\end{enumerate}}
\end{theorem}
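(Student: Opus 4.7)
The plan is to prove both parts simultaneously by contrapositive, using a single universal net indexed by all of $[-v,v]\times \mathbb{N}$. By symmetry it suffices to handle the ``right'' version of part $(ii)$; part $(i)$ will then follow by assembling the left and right conclusions, since any of the three hypotheses of $(i)$ supplies at once the hypothesis of both the left and right forms of $(ii)$ applied to the same net.

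Suppose $T$ is not $\mathbb{m}_r\mathbb{o}$-bounded. Then there are $v\in E_+$ and $u\in F_+$ such that $A:=\{Tx\cdot u:x\in[-v,v]\}$ is not order bounded in $F$. I would form the directed set $\Lambda:=[-v,v]\times \mathbb{N}$ with order $(x_1,n_1)\ge(x_2,n_2)\iff n_1\ge n_2$, and define the net $z_\alpha:=x/n$ for $\alpha=(x,n)\in\Lambda$. Since $E$ is Archimedean, $v/n\downarrow 0$, and from $|z_{(x,n)}|\le v/n$ one reads off simultaneously $z_\alpha\oc 0$ and $z_\alpha\rc 0$. Moreover, for any $w\in E_+$ we have $|z_\alpha|\cdot w\le (v\cdot w)/n$ and $w\cdot|z_\alpha|\le (w\cdot v)/n$, both of which decrease to $0$ by Archimedeanness; hence $z_\alpha\moc 0$ as well.

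Each of the three continuity hypotheses on $T$ therefore applies to $z_\alpha$ and yields $|Tz_\alpha|\cdot u\oc 0$. By definition of order convergence, there exist a net $(w_\beta)\downarrow 0$ in $F$ and indices $\alpha_\beta=(x^0_\beta,m_\beta)\in\Lambda$ such that $|Tz_\alpha|\cdot u\le w_\beta$ for all $\alpha\ge\alpha_\beta$. Fixing $\beta$ and taking $\alpha=(x,m_\beta)$ for arbitrary $x\in[-v,v]$ (which dominates $\alpha_\beta$) gives $\tfrac{1}{m_\beta}|Tx|\cdot u=|Tz_{(x,m_\beta)}|\cdot u\le w_\beta$, whence $|Tx\cdot u|\le|Tx|\cdot u\le m_\beta w_\beta$ uniformly in $x\in[-v,v]$. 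Thus $A\subseteq[-m_\beta w_\beta,m_\beta w_\beta]$, contradicting that $A$ is not order bounded. The ``left'' analogue is identical after replacing $Tx\cdot u$ by $u\cdot Tx$.

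The key technical point is the choice of indexing set. A naive sequential version --- choose $x_n\in[-v,v]$ with $(|Tx_n\cdot u|)_n$ not order bounded and apply continuity to $x_n/n$ --- only produces $|Tx_n\cdot u|\le nw$ for some $w$, which merely places $\{|Tx_n\cdot u|\}$ in the principal ideal generated by $w$ and need not contradict order-unboundedness. Indexing over all of $[-v,v]\times\mathbb{N}$ circumvents this by delivering the eventual bound from order convergence \emph{uniformly} in $x\in[-v,v]$ at each level $n=m_\beta$, thereby producing a genuine single order bound on $A$.
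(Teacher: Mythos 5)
Your proof is correct and follows essentially the same route as the paper's: both use the Abramovich--Sirotkin trick of the net $y/n$ indexed over (interval)$\,\times\,\mathbb{N}$, which $\mathbb{r}$-, $\mathbb{o}$-, and $\mathbb{mo}$-converges to $0$ at once, and then extract an order bound on $T[\text{interval}]\cdot u$ uniformly over the interval at a single level $n$. The only cosmetic differences are your contrapositive framing, the use of $[-v,v]$ instead of $[0,b]$, and ordering the index set by the $\mathbb{N}$-coordinate alone rather than lexicographically.
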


\begin{proof}
(i) Let $T:E\to F$ be an $\mathbb{mo}$-continuous $($resp., $\mathbb{omo}$-continuous, $\mathbb{rmo}$-continuous$)$ 
operator between two Riesz algebras.
Take an arbitrary order interval $[0,b]\subseteq E$ and consider the directed
set ${\cal I}={\mathbb N}\times [0,b]$ with the lexicographical order \cite[Thm.2.1]{AS}. 
Take a net $x_{(n,y)}:=\frac{1}{n}y$ indexed by $\cal{I}$.
It follows from $0\le x_{(n,y)}\le\frac{1}{n}b\downarrow 0$
that $x_{(n,y)}\rc 0$. Then $x_{(n,y)}\oc 0$, $x_{(n,y)}\mrc 0$, and $x_{(n,y)}\moc 0$. 
If $T$ is $\mathbb{mo}$-continuous $($resp., $\mathbb{omo}$-, and $\mathbb{rmo}$-continuous$)$ then $Tx_{(n,y)}\moc 0$. Hence
$$
   |w\cdot T(x_{(n,y)})|\vee|T(x_{(n,y)})\cdot w|\oc 0 \ \ \ (\forall w\in F_+).
$$
Take any $w\in F_+$. Then there exists a net $z_\beta\downarrow 0$ in $F$ such that
for every $\beta$ there exists $(n_\beta,y_\beta)\in\cal{I}$ 
satisfying $|w\cdot T(x_{(n,y)})|\vee|T(x_{(n,y)})\cdot w|\le z_\beta$ 
for all $(m,y)\ge (n_\beta,y_\beta)$. Take any $z_\beta$. Then, in particular, for all $u\in [0,b]$,
$$
  \frac{1}{n_\beta+1}(|w\cdot Tu|\vee |(Tu)\cdot w|)=
  |w\cdot T(x_{(n_\beta+1,u)})| \vee |T(x_{(n_\beta+1,u)})\cdot w|\le z_\beta.
\eqno(4)
$$ 
By (4), $w\cdot T[0,b] \cup (T[0,b]) \cdot w\subseteq [-(n_\beta+1)z_\beta,(n_\beta+1)z_\beta]$ 
and since $w\in F_+$ is arbitrary, we conclude that $T$ is $\mathbb{mo}$-bounded.

(ii) The modification of the proof in $(i)$ for the "$\mathbb{m}_l\mathbb{o}$-"
and "$\mathbb{m}_r\mathbb{o}$-bounded case" is trivial.
\end{proof}

\section{$\mathbb{mo}$-Lebesgue, $\mathbb{mo}$-$KB$, and $\mathbb{mo}$-Levi operators}

In this section, we  undertake an attempt to adopt some of results of the recent paper  \cite{AlEG} to  $\mathbb{mo}$-convergence
in Riesz algebras. Especially, we investigate the domination problem for related operators.

\begin{definition}\label{order-to-topology}%%%OK
{\em We say that an operator $T$
\begin{enumerate}
\item[$a)$]  
from a Riesz space $X$ to a Riesz algebra $F$ is 
{\em $\mathbb{mo}$-Lebes\-gue} if $Tx_\alpha\moc 0$ for every net $x_\alpha$ in $X$ such that $x_\alpha\downarrow 0$.
In particular,  every $\mathbb{omo}$-continuous operator is $\mathbb{mo}$-Lebesgue$;$
\item[$b)$] 
from a locally solid Riesz space $X=(X,\tau)$ to a Riesz algebra $F$ is an
{\em $\mathbb{mo}$-$KB$-ope\-ra\-tor} if, for every $\tau$-bounded increasing net $x_\alpha$ in $X_+$, there exists $x\in X$ such that $Tx_\alpha\moc Tx$$;$
\item[$c)$] 
from a locally solid Riesz space $X=(X,\tau)$ to a Riesz algebra $F$ is a
{\em quasi $\mathbb{mo}$-$KB$-ope\-ra\-tor} if, for every $\tau$-bounded increasing net $x_\alpha$ in $X_+$, $Tx_\alpha$ is an $\mathbb{mo}$-Cauchy net$;$
\item[$d)$]   
from a Riesz algebra $E$ to a Riesz algebra $F$ is an
{\em $\mathbb{mo}$-Levi operator}
if, for every $\mathbb{mo}$-bounded increasing net $x_\alpha$ in $E_+$,
there exists $x\in E$ such that $Tx_\alpha\moc Tx$$;$ 
\item[$e)$]   
from a Riesz algebra $E$ to a Riesz algebra $F$ is a {\em quasi $\mathbb{mo}$-Levi ope\-ra\-tor}
if, for every $\mathbb{mo}$-bounded increasing net $x_\alpha$ in $E_+$, the net 
$Tx_\alpha$ is $\mathbb{mo}$-Cauchy in $F$$.$ 
\end{enumerate}
The {\em $\mathbb{m}_l\mathbb{o}$-} {\em $\mathbb{m}_r\mathbb{o}$-Lebes\-gue operators},
the $\mathbb{m}_l\mathbb{o}$- and {\em $\mathbb{m}_r\mathbb{o}$-$KB$-operators},
the {\em quasi $\mathbb{m}_l\mathbb{o}$-$KB$}- and {\em quasi $\mathbb{m}_r\mathbb{o}$-$KB$-operators},
the {\em $\mathbb{m}_l\mathbb{o}$-} and {\em $\mathbb{m}_r\mathbb{o}$-Levi operators}, 
the {\em quasi $\mathbb{m}_l\mathbb{o}$-Levi} and {\em quasi $\mathbb{m}_r\mathbb{o}$-Levi operators} 
are defined analogiously.}
\end{definition}

Although it seems that the sequential versions of Definition~\ref{order-to-topology} and the adopted for 
$\mathbb{r}$-convergence modifications of this definition are also interesting, we do not study them in the present paper.

\begin{example}\label{Example5.1}%%%OK
{\em (cf. \cite[Ex.3.1]{AlEG})
Let $E$ be the $f$-algebra of all bounded real functions on $[0,1]$ which differ from a constant on at most countable subset of $[0,1]$
equipped with the poinwise algebraic multiplication. 
Let $T:E \to E$ be an operator that assigns to each $f\in E$ the constant function 
$Tf$ on  $[0,1]$ such that the set $\{t\in [0,1] : f(t) \ne (Tf)(t)\}$ is at most countable. Then $T$ is a rank one operator which is  continuous in the $\sup$-norm on $E$. Consider the following net in $E$ indexed by finite subsets  of $[0,1]$: 
$$ 
   f_\alpha(t) = \left\{
   \begin{array}{ccc}
   1 &\text{ if } & t \not\in \alpha\\
   0 &\text{ if } & t \in \alpha
   \end{array}
   \right..
$$
Then $f_\alpha \downarrow 0$ in $E$ and so $f_\alpha \moc 0$. However, 
$Tf_\alpha \equiv \mathbb{1}_{[0,1]}$  for all $\alpha$,
and hence  $Tf_\alpha \moc  \mathbb{1}_{[0,1]} \ne 0$.
Therefore $T$ is neither $\mathbb{omo}$- nor $\mathbb{mo}$-Lebesgue.}
\end{example}

It is well known that each order bounded disjointness preserving operator $T$ between Riesz spaces $X$ and $Y$
has modulus $|T|$, and $|T||x|=|T|x||=|Tx|$ for all $x\in X$;
and there exist Riesz homomorphisms $R_1,R_2:X\to Y$ such that $T = R_1-R_2$. 

The next theorem is motivated by \cite[Thm.2.5]{AlEG} and has the similar proof. 

\begin{theorem}\label{KB disj pres dominated property}%%%OK
Let $T$ be an order bounded disjointness preserving $\mathbb{mo}$-$KB$-operator 
from a locally solid Riesz space $(X,\tau)$ to a Riesz algebra $F$.
If $|S|\le|T|$ then $S$ is an $\mathbb{mo}$-$KB$-operator.
The similar result holds true for $\mathbb{m}_l\mathbb{o}$-$KB$  and $\mathbb{m}_r\mathbb{o}$-$KB$ operators.
\end{theorem}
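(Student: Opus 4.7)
The plan is to show that the element $x\in X$ supplied by the $\mathbb{mo}$-$KB$ property of $T$ also serves as the limit candidate for $S$, i.e.\ that $Sx_\alpha\moc Sx$. The strategy is a squeeze against the net $|T||x_\alpha-x|$, which I will first show $\mathbb{mo}$-converges to $0$ in $F$ using the disjointness-preserving identity recalled immediately before the theorem.

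Fix a $\tau$-bounded increasing net $x_\alpha$ in $X_+$. Since $T$ is $\mathbb{mo}$-$KB$, choose $x\in X$ with $Tx_\alpha\moc Tx$; unpacking the definition, this means $u\cdot|Tx_\alpha-Tx|\oc 0$ and $|Tx_\alpha-Tx|\cdot u\oc 0$ for every $u\in F_+$. The key step is to reinterpret this as an $\mathbb{mo}$-statement about $|T||x_\alpha-x|$: order boundedness and disjointness preservation of $T$ give $|T||y|=|Ty|$ for every $y\in X$, so applied to $y=x_\alpha-x$ we obtain $|T||x_\alpha-x|=|Tx_\alpha-Tx|$, and therefore $|T||x_\alpha-x|\moc 0$ in $F$.

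The remainder is routine. Combining $|S|\le|T|$ with the standard estimate $|Sy|\le|S||y|$ yields
$$
0\le|Sx_\alpha-Sx|=|S(x_\alpha-x)|\le|S||x_\alpha-x|\le|T||x_\alpha-x|.
$$
Left multiplication by any $u\in F_+$ preserves order in the Riesz algebra $F$, so $0\le u\cdot|Sx_\alpha-Sx|\le u\cdot|T||x_\alpha-x|\oc 0$, and the $\mathbb{o}$-convergence squeeze forces $u\cdot|Sx_\alpha-Sx|\oc 0$; the symmetric argument on the right gives $|Sx_\alpha-Sx|\cdot u\oc 0$. Hence $Sx_\alpha\moc Sx$, so $S$ is $\mathbb{mo}$-$KB$. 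The $\mathbb{m}_l\mathbb{o}$-$KB$ and $\mathbb{m}_r\mathbb{o}$-$KB$ variants follow from exactly the same reasoning with only one of the two multiplicative factors retained throughout.

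The only non-routine point is the identity $|T||y|=|Ty|$ afforded by disjointness preservation of $T$: without it one would only have $|Tx_\alpha-Tx|\le|T||x_\alpha-x|$, which runs in the wrong direction for converting the hypothesis $Tx_\alpha\moc Tx$ into convergence of the dominating net $|T||x_\alpha-x|$ needed to absorb $|Sx_\alpha-Sx|$. So disjointness preservation is doing essentially all the work, and this is the step on which the whole argument hinges.
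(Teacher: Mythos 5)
Your proof is correct and follows essentially the same route as the paper: both arguments hinge on the disjointness-preserving identity $|T||x_\alpha-x|=|Tx_\alpha-Tx|$ to transfer the $\mathbb{mo}$-convergence of $Tx_\alpha$ to the dominating net, and then squeeze $u\cdot|Sx_\alpha-Sx|$ and $|Sx_\alpha-Sx|\cdot u$ via $|S|\le|T|$. You have also correctly identified this identity as the crux of the argument, exactly as the paper's proof uses it.
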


\begin{proof}
We restrict ourselves to the case of $\mathbb{mo}$-$KB$-operators.
Take a $\tau$-bounded increasing net $x_\alpha$ in $X_+$. Then $T(x_\alpha-x)\moc 0$ for some $x\in X$. 
So, for every $u\in F_+$, $u\cdot |T(x_\alpha-x)|\oc 0$ and $|T(x_\alpha-x)| \cdot u \oc 0$, and hence
$$
   u\cdot|S(x_\alpha-x)|\le u\cdot|S||x_\alpha-x|\le u\cdot|T||x_\alpha-x|=u\cdot|Tx_\alpha-Tx|\oc 0;
$$ 
$$
   |S(x_\alpha-x)|\cdot u\le |S||x_\alpha-x|\cdot u\le |T||x_\alpha-x|\cdot u=|Tx_\alpha-Tx|\cdot u\oc 0.
$$ 
Thus $(u\cdot|Sx_\alpha-Sx|)\vee(|Sx_\alpha-Sx|\cdot u)\oc 0$ for every $u\in F_+$.
Therefore $Sx_\alpha\moc Sx$, and hence $S$ is an $\mathbb{mo}$-$KB$-operator.
\end{proof}

Since, for a  Riesz homomorphism $T$,  $0\le S\le T$ implies that $S$ is also a Riesz homomorphism, 
the next result follows from  Theorem \ref{KB disj pres dominated property}.

\begin{corollary}\label{KB lattice homomorphism dominated property}%%%OK
Let $T$ be an $\mathbb{mo}$-$KB$ Riesz homomorphism 
from a locally solid Riesz space to a Riesz algebra.
Then every operator $S$ satisfying  $0\le S\le T$ is also an $\mathbb{mo}$-$KB$ Riesz homomorphism.
The similar result is true for $\mathbb{m}_l\mathbb{o}$-$KB$ and $\mathbb{m}_r\mathbb{o}$-$KB$ Riesz homomorphisms.
\end{corollary}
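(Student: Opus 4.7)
The plan is to combine two ingredients: (a) the domination property for Riesz homomorphisms (mentioned in the paragraph preceding the corollary), which says that whenever $0\le S\le T$ and $T$ is a Riesz homomorphism, the operator $S$ is automatically a Riesz homomorphism too; and (b) Theorem~\ref{KB disj pres dominated property}, which provides the $\mathbb{mo}$-$KB$ conclusion under a suitable modulus domination between $S$ and $T$. So the argument will have essentially nothing to do beyond checking that the hypotheses of Theorem~\ref{KB disj pres dominated property} are met.

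First I would verify the structural assumptions on $T$. Any Riesz homomorphism is positive, hence order bounded, and preserves the lattice operations, so it is disjointness preserving (indeed, $x\wedge y=0$ implies $Tx\wedge Ty=T(x\wedge y)=0$). Since $T$ is positive we have $|T|=T$, and since $S\ge 0$ we have $|S|=S$; thus the hypothesis $0\le S\le T$ of the corollary is precisely the inequality $|S|\le|T|$ needed in Theorem~\ref{KB disj pres dominated property}.

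Next I would invoke Theorem~\ref{KB disj pres dominated property} directly: because $T$ is an order bounded, disjointness preserving, $\mathbb{mo}$-$KB$-operator from the locally solid Riesz space $X$ to the Riesz algebra $F$, and $|S|\le|T|$, the theorem yields that $S$ is itself an $\mathbb{mo}$-$KB$-operator. Finally, the classical domination result for Riesz homomorphisms (as recalled just above the corollary) guarantees that $S$ is also a Riesz homomorphism. Hence $S$ is an $\mathbb{mo}$-$KB$ Riesz homomorphism, as required; the $\mathbb{m}_l\mathbb{o}$-$KB$ and $\mathbb{m}_r\mathbb{o}$-$KB$ variants are obtained by the same reasoning applied to the respective parts of Theorem~\ref{KB disj pres dominated property}.

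There is no real obstacle in this argument; the only point that requires a little care is the invocation of the domination property for Riesz homomorphisms, but this is a well-known classical fact and is already stated in the text preceding the corollary, so it can be cited rather than reproved.
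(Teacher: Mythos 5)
Your proof is correct and follows exactly the paper's route: the paper likewise notes that $0\le S\le T$ with $T$ a Riesz homomorphism forces $S$ to be a Riesz homomorphism, and then applies Theorem~\ref{KB disj pres dominated property} (noting $|S|=S\le T=|T|$ and that a Riesz homomorphism is order bounded and disjointness preserving). You merely spell out the verification of the theorem's hypotheses in slightly more detail than the paper does.
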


The following result is analogous to \cite[Thm.2.6]{AlEG}. 

\begin{theorem}\label{quasi $KB$ dominated property}%%%OK
Let $T$ be a positive quasi $\mathbb{mo}$-$KB$ operator  
from a locally solid Riesz space $(X,\tau)$ to a Riesz algebra $F$.
Then every operator $S$ satisfying  $0\le S\le T$ is also a quasi $\mathbb{mo}$-$KB$ operator.
The similar result holds true for quasi $\mathbb{m}_l\mathbb{o}$-$KB$ and quasi $\mathbb{m}_r\mathbb{o}$-$KB$ operators.
\end{theorem}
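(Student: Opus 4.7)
The plan is to mirror the proof of Theorem~\ref{KB disj pres dominated property}, adapted to the Cauchy setting: since $T$ is now positive but not assumed to be disjointness preserving, and $\mathbb{mo}$-Cauchyness concerns pairs of indices rather than convergence to a single limit, the main adjustment is to route each pair through a common upper bound in the index set.

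First, I would fix a $\tau$-bounded increasing net $(x_\alpha)_{\alpha\in A}$ in $X_+$ and, from the hypothesis on $T$, obtain that $(Tx_\alpha)$ is $\mathbb{mo}$-Cauchy in $F$. For arbitrary $\alpha,\alpha'\in A$, pick $\gamma\in A$ with $\gamma\ge\alpha,\alpha'$ by directedness. Monotonicity of $(x_\alpha)$ forces $x_\gamma-x_\alpha,\,x_\gamma-x_{\alpha'}\in X_+$, so the triangle inequality combined with $0\le S\le T$ yields the key estimate
$$|Sx_\alpha-Sx_{\alpha'}|\le S(x_\gamma-x_\alpha)+S(x_\gamma-x_{\alpha'})\le T(x_\gamma-x_\alpha)+T(x_\gamma-x_{\alpha'})=|Tx_\gamma-Tx_\alpha|+|Tx_\gamma-Tx_{\alpha'}|.$$

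Multiplying this inequality by an arbitrary $u\in F_+$ on either side and invoking the $\mathbb{mo}$-Cauchyness of $(Tx_\alpha)$, both $u\cdot|Sx_\alpha-Sx_{\alpha'}|$ and $|Sx_\alpha-Sx_{\alpha'}|\cdot u$ are squeezed between $0$ and sums of two $\mathbb{o}$-null nets, so $(Sx_\alpha)$ is $\mathbb{mo}$-Cauchy in $F$ and hence $S$ is a quasi $\mathbb{mo}$-$KB$ operator. The one-sided claims for quasi $\mathbb{m}_l\mathbb{o}$-$KB$ and quasi $\mathbb{m}_r\mathbb{o}$-$KB$ operators follow from exactly the same inequality by dropping one multiplication side.

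The main technical point I foresee is the legitimacy of substituting the dependent index $\gamma=\gamma(\alpha,\alpha')$ into the Cauchy witness for $(Tx_\alpha)$: given a net $v_\lambda\downarrow 0$ that eventually dominates $u\cdot|Tx_\beta-Tx_{\beta'}|$ for all sufficiently late $(\beta,\beta')$, any choice $\gamma\ge\alpha,\alpha'$ with $\alpha,\alpha'$ late forces both $(\gamma,\alpha)$ and $(\gamma,\alpha')$ to be late as well, so $2v_\lambda\downarrow 0$ dominates the right-hand side eventually. This is routine directedness bookkeeping but is the only genuinely new ingredient beyond the estimate used in Theorem~\ref{KB disj pres dominated property}.
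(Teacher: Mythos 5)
Your argument is correct and essentially the paper's: both reduce the Cauchy difference $|Sx_{\alpha}-Sx_{\alpha'}|$ to differences of $S$ at comparable indices, apply $0\le S\le T$ to the resulting positive elements, and transfer the order-Cauchy witness $z_\beta\downarrow 0$ for $(Tx_\alpha)$, the only cosmetic difference being that you anchor at a common upper bound $\gamma\ge\alpha,\alpha'$ while the paper anchors at the threshold index $\alpha_\beta$ lying below both. If anything, your two-term estimate $|Sx_{\alpha}-Sx_{\alpha'}|\le S(x_\gamma-x_{\alpha})+S(x_\gamma-x_{\alpha'})$ is the more careful version of the paper's single-term bound $u\cdot|Sx_{\alpha_1}-Sx_{\alpha_2}|\le u\cdot|S(x_{\alpha_1}-x_{\alpha_\beta})|$, which for incomparable $\alpha_1,\alpha_2$ implicitly needs the companion term $S(x_{\alpha_2}-x_{\alpha_\beta})$ as well.
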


\begin{proof}
Take an increasing $\tau$-bounded net $x_\alpha$ in $X_+$ and let $0\le S\le T$.
Then $Tx_\alpha\uparrow$, and since $T$ is quasi $\mathbb{mo}$-$KB$, the net $Tx_\alpha$ is $\mathbb{mo}$-Cauchy. 
Pick a $u\in F_+$. Then there exists a net $z_\beta\downarrow$ in $F$ such that, for each $\beta$,
there exists $\alpha_\beta$ such that 
$u\cdot |Tx_{\alpha_1}-Tx_{\alpha_2}|\le z_\beta$ for all $\alpha_1, \alpha_2\ge \alpha_\beta$.
Choosing $\alpha_1, \alpha_2\ge \alpha_\beta$ for a fixed $\alpha_\beta$ we have
$$
   u\cdot |Sx_{\alpha_1}-Sx_{\alpha_2}|\le 
   u\cdot |S(x_{\alpha_1}-x_{\alpha_\beta})|\le 
   u\cdot |T(x_{\alpha_1}-x_{\alpha_\beta})|\le z_\beta.
  \eqno(5)
$$
By (5), $u\cdot |Sx_{\alpha_1}-Sx_{\alpha_2}|\le z_\beta$ for all 
$\alpha_1, \alpha_2\ge \alpha_\beta$. Thus the net $Sx_\alpha$ is $\mathbb{m}_l\mathbb{o}$-Cauchy.
Hence, the operator $S$ is quasi $\mathbb{m}_l\mathbb{o}$-$KB$.
Similar argument shows that $S$ is quasi $\mathbb{m}_r\mathbb{o}$-$KB$.
Therefore, $S$ is a quasi $\mathbb{m}\mathbb{o}$-$KB$-operator.
\end{proof}

The following result is motivated by \cite[Thm.2.7]{AlEG}. 

\begin{theorem}\label{quasi Levi dominated property}%%%OK
Let $T$ be a positive quasi $\mathbb{m}_l\mathbb{o}$-Levi ope\-ra\-tor from a Riesz algebra $E$ to a Riesz algebra $F$.
Then each operator $S:E\to F$ satisfying $0\le S\le T$ is also quasi $\mathbb{m}_l\mathbb{o}$-Levi.
The similar result holds true for quasi $\mathbb{m}_r\mathbb{o}$-Levi and $\mathbb{m}\mathbb{o}$-Levi operators.
\end{theorem}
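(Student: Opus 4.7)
The plan is to mirror the argument of Theorem~\ref{quasi $KB$ dominated property} almost verbatim, substituting $\mathbb{m}_l\mathbb{o}$-boundedness of the test nets (the relevant notion in a Riesz algebra) for $\tau$-boundedness (the relevant notion in a locally solid space). So fix a positive quasi $\mathbb{m}_l\mathbb{o}$-Levi operator $T:E\to F$ and $S$ with $0\le S\le T$. To show $S$ is quasi $\mathbb{m}_l\mathbb{o}$-Levi, take an arbitrary $\mathbb{m}_l\mathbb{o}$-bounded increasing net $(x_\alpha)\subseteq E_+$ and verify that $(Sx_\alpha)$ is $\mathbb{m}_l\mathbb{o}$-Cauchy in $F$.

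First, since $T\ge 0$, the net $(Tx_\alpha)$ is increasing, and by the quasi $\mathbb{m}_l\mathbb{o}$-Levi property of $T$ it is $\mathbb{m}_l\mathbb{o}$-Cauchy. Thus, for each fixed $u\in F_+$, there is a net $z_\beta\downarrow 0$ in $F$ and indices $\alpha_\beta$ such that $u\cdot|Tx_{\alpha_1}-Tx_{\alpha_2}|\le z_\beta$ whenever $\alpha_1,\alpha_2\ge\alpha_\beta$.

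The key domination step is the following. Given $\alpha_1,\alpha_2\ge\alpha_\beta$, use the directedness of the index set to pick $\alpha_3\ge\alpha_1,\alpha_2$. Since $(x_\alpha)$ is increasing and $S\ge 0$, both $Sx_{\alpha_1}$ and $Sx_{\alpha_2}$ lie in $[Sx_{\alpha_\beta},Sx_{\alpha_3}]$, and therefore
\[
|Sx_{\alpha_1}-Sx_{\alpha_2}|\le Sx_{\alpha_3}-Sx_{\alpha_\beta}=S(x_{\alpha_3}-x_{\alpha_\beta})\le T(x_{\alpha_3}-x_{\alpha_\beta})=Tx_{\alpha_3}-Tx_{\alpha_\beta},
\]
where the second inequality uses $0\le S\le T$ applied to the positive element $x_{\alpha_3}-x_{\alpha_\beta}$. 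Left-multiplying by $u$ gives $u\cdot|Sx_{\alpha_1}-Sx_{\alpha_2}|\le u\cdot|Tx_{\alpha_3}-Tx_{\alpha_\beta}|\le z_\beta$, so $(Sx_\alpha)$ is $\mathbb{m}_l\mathbb{o}$-Cauchy, as required.

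The arguments for quasi $\mathbb{m}_r\mathbb{o}$-Levi and quasi $\mathbb{m}\mathbb{o}$-Levi operators are obtained by simply multiplying by $u$ on the right (respectively, on both sides) and using the matching $\mathbb{m}_r\mathbb{o}$- (respectively, $\mathbb{m}\mathbb{o}$-) boundedness hypothesis on the test net; the domination inequality above is unchanged. The only mild technical point is that a pair $\alpha_1,\alpha_2\ge\alpha_\beta$ need not be comparable in the directed set, which is handled by the auxiliary index $\alpha_3$ precisely as in Theorem~\ref{quasi $KB$ dominated property}; beyond that the proof is a direct transcription of that theorem.
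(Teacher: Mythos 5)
Your proof is correct and follows essentially the same route as the paper's: transfer the $\mathbb{m}_l\mathbb{o}$-Cauchy estimate from $Tx_\alpha$ to $Sx_\alpha$ via the domination $0\le S\le T$ and the positivity of multiplication by $u\in F_+$. Your introduction of the auxiliary index $\alpha_3$ to bound $|Sx_{\alpha_1}-Sx_{\alpha_2}|$ for possibly non-comparable $\alpha_1,\alpha_2$ is in fact a slight tightening of the paper's displayed inequality (6), which as written implicitly assumes the two indices are comparable.
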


\begin{proof}
Let $x_\alpha$ be an $\mathbb{m}_l\mathbb{o}$-bounded increasing net in $E_+$.
Then the net $Tx_\alpha$ is $\mathbb{m}_l\mathbb{o}$-Cauchy in $F$. Pick some $u\in F_+$.
There exists a net $z_\beta\downarrow 0$ in $F$ such that, for each $\beta$,
there exists $\alpha_\beta$ such that $u\cdot |Tx_{\alpha_1}-Tx_{\alpha_2}|\le z_\beta$ for all $\alpha_1, \alpha_2\ge \alpha_\beta$.
Pick $\alpha_1, \alpha_2\ge \alpha_\beta$ for a fixed $\alpha_\beta$. Then
$$
   u\cdot |Sx_{\alpha_1}-Sx_{\alpha_2}|\le u\cdot |S(x_{\alpha_1}-x_{\alpha_\beta})|\le 
   u\cdot |T(x_{\alpha_1}-x_{\alpha_\beta})| \le z_\beta.
\eqno(6)
$$
By (6),  $u\cdot |Sx_{\alpha_1}-Sx_{\alpha_2}|\le z_\beta$ for all $\alpha_1, \alpha_2\ge \alpha_\beta$. 
Since $u\in F_+$ is arbitrary, the net $Sx_\alpha$ is $\mathbb{m}_l\mathbb{o}$-Cauchy.
Hence, $S$ is quasi $\mathbb{m}_l\mathbb{o}$-Levi. Analogously, if $T$ is quasi $\mathbb{m}_r\mathbb{o}$-Levi 
or $\mathbb{m}\mathbb{o}$-Levi then $S$ has the same property.
\end{proof}

We conclude the section with short discussion of extensions of operators introduced in Definition~\ref{order-to-topology} 
to the second order duals. In what follows we assume that the first order dual of any Riesz space under the consideration separates its points. We recall that, under these conditions, any Riesz space  $X$ is embedded in its second order dual $X''$ via the mapping $x \stackrel{i}{\to} x''$, where $x''(z)=z(x)$ for all $z\in X'$. 
If $(E, \cdot)$ is a Riesz algebra, then $E''$ is again Riesz algebra with respect to the {\em Arens multiplication} \cite{HuP}. In all cases of an operator $T$ considered in Definition~\ref{order-to-topology}, i.e. $T:X\to F$
and $T:E\to F$, the second  dual $T''$ is acting between spaces of the same types. Therefore
it is natural to ask whether or not, for an  $\mathbb{mo}$-Lebes\-gue, $\mathbb{mo}$-$KB$-, and $\mathbb{mo}$-Levi operator $T$, the operator $T''$ is $\mathbb{mo}$-Lebes\-gue, $\mathbb{mo}$-$KB$-, and $\mathbb{mo}$-Levi respectively.  The authors do not know examples of $T$, for which the answer to the above question is negative.

{\tiny 

}
\end{document}